\documentclass[11pt,reqno,oneside]{amsart}
\usepackage[T2A]{fontenc}
\usepackage[utf8]{inputenc}
\usepackage[english]{babel}
\usepackage[margin=2.75cm]{geometry}

\usepackage{amsmath, amsfonts}
\usepackage{bm} 
\usepackage{mathtools}
\providecommand\given{} 
\newcommand\SetSymbol[1][]{%
	\nonscript\:#1\vert
	\allowbreak \nonscript\:	\mathopen{}}
\DeclarePairedDelimiterX\Set[1]\{\}{%
	\renewcommand\given{\SetSymbol[\delimsize]}	#1} 

\usepackage{array} 
\usepackage{multirow}
\usepackage{rotating}
\usepackage{booktabs}
\usepackage{subcaption}

\usepackage{tikz}
\usetikzlibrary{arrows,patterns,calc}
\usetikzlibrary{decorations.pathreplacing} 

\usepackage[pdfborder={0 0 0}, pdffitwindow = true, pdfstartview = FitH,  pdftitle = {All 2-neighborly d-polytopes with at most d+9 facets},
pdfauthor = {A.N. Maksimenko}
]{hyperref}


\usepackage{numprint} 

\usepackage[lined,ruled]{algorithm2e} 


\theoremstyle{plain}
\newtheorem{theorem}{Theorem}
\newtheorem{proposition}[theorem]{Proposition}
\newtheorem{lemma}[theorem]{Lemma}
\newtheorem{corollary}[theorem]{Corollary}

\theoremstyle{definition}

\theoremstyle{remark}

\newcommand{\Z}{\mathbb{Z}}  
\newcommand{\N}{\mathbb{N}}  
\DeclareMathOperator{\conv}{conv} 
\DeclareMathOperator{\vertices}{vrt} 
\DeclareMathOperator{\facets}{fct} 
\DeclareMathOperator{\rows}{rows} 
\DeclareMathOperator{\cols}{cols} 



\title{All 2-neighborly d-polytopes with at most d + 9 facets}

\author{Aleksandr N. Maksimenko}
\thanks{The study was carried out as part of an internship program for employees of Russian educational and scientific organizations at the Higher School of Economics}
\address{National Research University Higher School of Economics}
\email{maximenko.a.n@gmail.com}
\author{Dmitry V. Gribanov}
\address{National Research University Higher School of Economics}
\email{dimitry.gribanov@gmail.com}
\author{Dmitry S. Malyshev}
\address{National Research University Higher School of Economics}
\email{dsmalyshev@rambler.ru.com}


\begin{document}

\begin{abstract}
We give a~complete enumeration of~all 2-neighborly $d$-polytopes with $d+9$ and less facets.
All of them are realized as 0/1-polytopes, except a 6-polytope $P_{6,10,15}$ with 10 vertices and 15 facets, and pyramids over $P_{6,10,15}$.
In particular, we update the lower bounds for the number of facets of a 2-neighborly $d$-polytope $P$ and showed that the number of facets of $P$ is not less than the number of its vertices $f_0(P)$ for $f_0(P) \le d+10$.
\end{abstract}

\maketitle

\section{Introduction}

Throughout the paper, we consider $d$-dimensional convex polytopes and call them \emph{$d$-polytopes}.
The basic properties of convex polytopes and some notions used below, can be seen in~\cite{Grunbaum:2003}.
A~$d$-polytope $P$ is \emph{2-neighborly}, if any two of its vertices form an edge (1-face) of $P$.
In~particular, simplices are examples of 2-neighborly polytopes.
It~is well known, that there are no other examples in dimension 3, but already in dimension 4, there are infinitely many combinatorial types of 2-neighborly polytopes. 
In the following, we consider only 2-neighborly $d$-polytopes for $d \ge 4$.
Since every face of a 2-neighborly polytope $P$ is 2-neighborly, then $P$ is 3-simplicial (every 3-face is a simplex). 
In~\cite{Maksimenko:2010}, it was conjectured that the number of facets of a 2-neighborly $d$-polytope $P$ is not less than the number of vertices: $\facets(P) \ge \vertices(P)$. The conjecture was proved for the cases $d \le 6$ and $\vertices(P) \le d+5$.

We enumerate all combinatorial types of 2-neighborly $d$-polytopes with at most $d+9$ facets.
They are 11 polytopes, listed in Fig.~\ref{fig:0}, and $k$-fold pyramids over them, $k \in \N$.
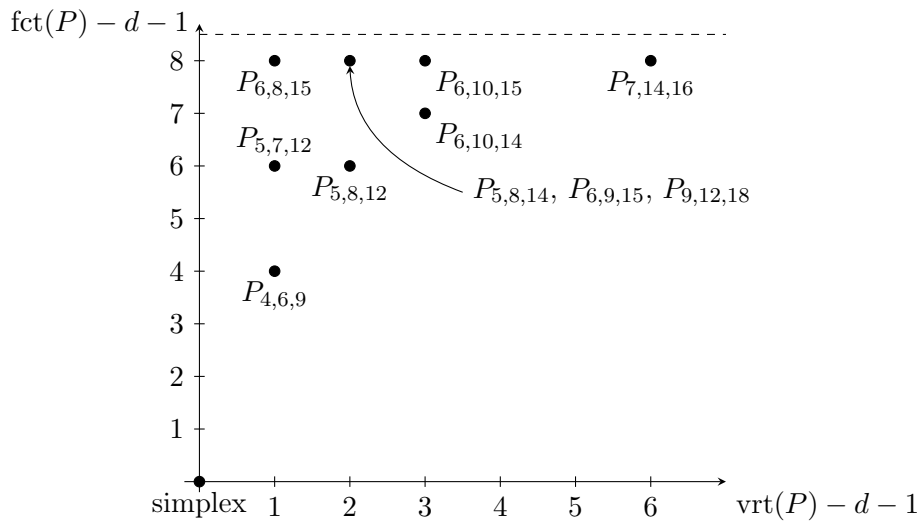
\begin{figure}
	\centering
	\newcommand{\smallradius}{.1}
	\newcommand{\point}{ellipse (0.07 and 0.1) }
	\begin{tikzpicture}[yscale=0.7, fill=black, draw=black, >=stealth]
	\draw [ ->] ( -0.2, 0) -- (7, 0) node[below right] {$\vertices(P) - d - 1$};
	\draw [ ->] (0, -0.2) -- (0, 8.7) node[left] {$\facets(P) - d - 1$};	
	\foreach \x in {1,...,6} \draw (\x, 0.1) -- (\x, -0.1) node[below] {$\x$};
	\foreach \y in {1,...,8} \draw (0.07, \y) -- (-0.07, \y) node[left] {$\y$};
	\draw[dashed] (0, 8.5) -- (7, 8.5);
	\filldraw (0,0) \point node[below] {simplex};
	\filldraw (1,4) \point node[below] {$P_{4,6,9}$};
	\filldraw (1,6) \point node[above] {$P_{5,7,12}$};
	\filldraw (1,8) \point node[below] {$P_{6,8,15}$};
	\filldraw (2,6) \point node[below] {$P_{5,8,12}$};
	\filldraw (2,8) \point;
	\draw[<-] (2,7.9) to[bend right] (3.5, 5.5) node[right] {$P_{5,8,14}$, $P_{6,9,15}$, $P_{9,12,18}$};
	\filldraw (3,7) \point node[below right] {$P_{6,10,14}$};
	\filldraw (3,8) \point node[below right] {$P_{6,10,15}$};
	\filldraw (6,8) \point node[below] {$P_{7,14,16}$};
	\end{tikzpicture}
	\caption{The 2-neighborly $d$-polytopes with at most $d+9$ facets}\label{fig:0}
\end{figure}
All of them, except $P_{6,10,15}$, are realized as 0/1-polytopes (see Fig.~\ref{fig:01}, \ref{fig:3}, and \ref{fig:d6-7}). $P_{6,10,15}$ is realized as a~$\{-1,0,1\}$-polytope (see Fig.~\ref{fig:P61015}).
The polytope $P_{7,14,16}$ was first described in~\cite{Maksimenko:2019}, all other (except $P_{6,10,15}$)~--- in~\cite{Aichholzer:2000}.
Consequently, the conjecture $\facets(P) \ge \vertices(P)$ is true for any 2-neighborly $d$-polytope $P$ with at most $d + 10$ vertices. Also, our result means that we know values of the function
\[
\mu(d, n) = \min \Set*{\facets(P) \given \text{$P$ is a 2-neighborly $d$-polytope with $n$ vertices}}
\]
for $n \in \{d+1, d+2, d+3, d+4, d+7\}$, and $\mu(d,n) \ge d+10$ for $n \ge d+5$, $n \ne d+7$.

In Section~\ref{sec:Gale}, we consider the case $\vertices(P) \le d+3$, by using Gale diagrams.
In Section~\ref{sec:IncMat}, we investigate properties of facet-vertex incidence matrices of 2-neighborly polytopes and describe an~algorithm for enumerating such matrices with small numbers of rows and columns.
In Section~\ref{sec:d+4}, with help of the computer program, we prove that there are no 2-neighborly $d$-polytopes with at least $d+4$ vertices and at most $d+9$ facets, except $P_{6,10,14}$, $P_{6,10,15}$, $P_{7,14,16}$, and $k$-fold pyramids over them.
The computer program and the descriptions of polytopes are available at \url{https://github.com/maksimenko-a-n/2neighborly-inc-matrices}.

\section{d-Polytopes with at most d + 3 vertices}
\label{sec:Gale}

It is well known~\cite[Sec.~6.3]{Grunbaum:2003} that the combinatorial type of a $d$-polytope $P$ with $d+2$ vertices can be coded by a Gale diagram $(m_0, \{m_1, m_{-1}\})$, where $m_0, m_1, m_{-1} \in \Z$, $m_0 \ge 0$, $m_1 \ge 2$, $m_{-1} \ge 2$, and $m_0 + m_1 + m_{-1} = d+2$.
A polytope is 2-neighborly iff $m_1 \ge 3$, $m_{-1} \ge 3$.
The number of facets is equal to $m_0 + m_{-1} m_{1}$.
Hence, the inequality $\facets(P) \le d+9$ is equivalent to $m_0 + m_{-1} m_{1} \le m_0 + m_1 + m_{-1} + 7$.
Thus, pair $\{m_{-1}, m_{1}\}$ must be one of the following types: $\{3,3\}$, $\{3,4\}$, $\{3,5\}$.
For $m_0 = 0$, these three types can be represented by 0/1-polytopes (see Fig~\ref{fig:01}).
A polytope with $m_0 > 0$ is an $m_0$-fold pyramid over a polytope with $m_0 = 0$.
Therefore, every Gale diagram of types $(m_0, \{3,3\})$, $(m_0, \{3,4\})$, $(m_0, \{3,5\})$ can be represented by a 0/1-polytope.

\begin{figure}
	\begin{minipage}[t]{.21\linewidth}
		\centering
		(0, 0, 0, 0),\\
		(0, 0, 0, 1),\\
		(0, 0, 1, 0),\\
		(0, 1, 0, 0),\\
		(1, 0, 0, 1),\\
		(1, 1, 1, 0).
		\subcaption{$P_{4,6,9}$}\label{fig:P469}
	\end{minipage}%
	\begin{minipage}[t]{.24\linewidth}
		\centering
		(0, 0, 0, 0, 0),\\
		(0, 0, 0, 0, 1),\\
		(0, 0, 0, 1, 0),\\
		(0, 0, 1, 0, 0),\\
		(0, 1, 0, 0, 0),\\
		(1, 0, 0, 0, 1),\\
		(1, 1, 1, 1, 0).
		\subcaption{$P_{5,7,12}$}\label{fig:P5712}
	\end{minipage}
	\begin{minipage}[t]{.29\linewidth}
		\centering
		(0, 0, 0, 0, 0, 0),\\
		(0, 0, 0, 0, 0, 1),\\
		(0, 0, 0, 0, 1, 0),\\
		(0, 0, 0, 1, 0, 0),\\
		(0, 0, 1, 0, 0, 0),\\
		(0, 1, 0, 0, 0, 0),\\
		(1, 0, 0, 0, 0, 1),\\
		(1, 1, 1, 1, 1, 0).
		\subcaption{$P_{6,8,15}$}\label{fig:P6815}
	\end{minipage}
	\caption{Examples of $d$-dimensional 2-neighborly 0/1-polytopes $P_{d,v,f}$ with $v = d+2$ vertices and $f$ facets.}\label{fig:01}
\end{figure}

The~combinatorial type of~a~$d$-polytope with $d+3$ vertices is defined by the~appropriate \emph{reduced Gale diagram} or wheel-sequence~(see~\cite[Sec.~6.3]{Grunbaum:2003} and~\cite{Fusy:2006}).
It is a set of points, placed at the center $C$ and vertices of a regular $2n$-gon, $n \ge 2$.
We enumearte the vertices from $0$ to $2n-1$ going clockwise (it does not matter what vertex is first).
The vertices and the center $C$ have nonnegative integer labels (multiplicities) $m_0$, $m_1$, \dots, $m_{2n-1}$, and $m(C)$.
In~the following we assume
\[
m_i \stackrel{\text{def}}{=} m_{i\bmod 2n}, \quad i \in \Z.
\]
In~particular, $m_{i+2n} \stackrel{\text{def}}{=} m_{i-2n} \stackrel{\text{def}}{=} m_i$.
A pair of opposite labels $m_i$ and $m_{i+n}$, $i\in[n]$, is called a \emph{diameter}.
The labels of a reduced Gale diagram have the following properties~\cite{Fusy:2006}:
\begin{enumerate}
	\item\label{P1}  $m(C) + m_0 + \dots + m_{2n-1} = d+3$.
	(The sum of all labels equals the number of vertices of~a~$d$-polytope.)
	\item $m_i + m_{i+n} > 0$ for every $i\in[n]$. 
	(A diagram has no diameter $\{0,0\}$.)
	\item $m_{i} + m_{i+1} > 0$ for every $i\in[2n]$. 
	(Two neighbours cannot both have label 0.)					
	\item\label{P4} $\sum_{j=i+1}^{i+n-1} m_{i} \ge 2$ for every $i\in[2n]$. 
	(The sum of labels in any open half-plane is not less than $2$.) 
\end{enumerate}
Two reduced Gale diagrams are combinatorially equivalent if one can be obtained from the other by a rotation or by a reflection.

A subset $S$ of these $d+3$ points (take into account the multiplicities) is called a \emph{cofacet}
if it is one of three types: 
a) the center $C$ itself, 
b) two opposite vertices $i$ and $i+n$ of the $2n$-gon, 
c) three vertices $\{i, j, k\}$ that form a triangle with $C$ in its interior.

\begin{theorem}{\cite[Sec.~6.3]{Grunbaum:2003}}
	\label{thm:GaleProperties}
	There are one-to-one correspondence between combinatorial types 
	of $d$-polytopes with $d+3$ vertices and reduced Gale diagrams with the properties \ref{P1}--\ref{P4} listed above.
	Moreover,
	\begin{description}
		\item[F] The number of facets of a polytope is equal to the number of cofacets of the appropriate reduced Gale diagram.
		\item[P] If $m(C) > 0$, then the polytope is a pyramid and its base is the same diagram with $m(C) \coloneqq m(C) - 1$. 
		\item[N] An appropriate polytope is 2-neighborly \textbf{iff} 
		$\sum_{j=i+1}^{i+n-1} m_{i} \ge 3$ for every $i\in[2n]$.
		(The sum of the labels in any open half-plane is at least 3.)
	\end{description}
\end{theorem}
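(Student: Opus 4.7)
The plan is to deduce everything from the standard Gale transform machinery in Grünbaum's book, specializing to the case $n = d+3$, so that the Gale transform lives in $\mathbb{R}^2$. Recall the construction: given a $d$-polytope $P$ with vertices $v_1,\dots,v_n$, the columns $(v_i,1)$ span a rank-$(d+1)$ subspace; any basis of the left kernel furnishes vectors $\bar v_1,\dots,\bar v_n\in\mathbb{R}^{n-d-1}$, the \emph{Gale transform}. The key duality (proved in \cite[Sec.~6.3]{Grunbaum:2003}) is that $F\subseteq\{v_1,\dots,v_n\}$ is the vertex set of a face of $P$ iff $0$ lies in the relative interior of $\conv\{\bar v_i : v_i \notin F\}$. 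Two polytopes are combinatorially equivalent iff their Gale transforms determine the same family of ``positive circuits''. The one-to-one correspondence and properties P1–P4 then arise by normalizing: replace each nonzero $\bar v_i$ by its direction on the unit circle (accumulating multiplicities $m_j$ on a regular $2n$-gon) and collect the zero vectors at the center with multiplicity $m(C)$. P1 counts vectors; P2 excludes a separating line (which would drop $\dim P$); P3 prevents a redundant vertex; P4 is the minimal positivity needed for $0$ to lie in the interior of $\conv\{\bar v_i\}$.

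Claim~F is the classification of minimal subsets $S$ with $0\in\mathrm{relint}\,\conv\{\bar v_i : v_i\in S\}$ in $\mathbb{R}^2$: either $S$ is the single zero vector (type~a), or two antipodal nonzero vectors (type~b), or three vectors spanning a triangle strictly containing the origin (type~c). Every such minimal $S$ is the vertex-complement of a facet, and conversely. Claim~P follows by observing that a Gale vector $\bar v_i = 0$ signals that $v_i$ is affinely independent of the others in the sense that \emph{every} facet either contains $v_i$ (type~a cofacet $\{C\}$) or corresponds to a cofacet lying on the $2n$-gon; thus $P$ is a pyramid with apex $v_i$, and removing it from the label gives the base's diagram.

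Claim~N is the heart of the argument. By Gale duality, $P$ is 2-neighborly iff, for every pair $T$ of vertices, $0$ lies in the relative interior of $\conv\{\bar v_i : v_i\notin T\}$; in $\mathbb{R}^2$ this means every open half-plane through the origin contains at least one of the remaining Gale vectors. Since the center vector never lies in any open half-plane, the condition is about the labels $m_j$ on the $2n$-gon: after deleting up to two such labels' worth of multiplicity, every open half-plane must still contain a point. Open half-planes through $0$ that avoid the diameter at direction $i$ contain precisely the labels $m_{i+1},\dots,m_{i+n-1}$ (half-planes that split a diameter give a weakly stronger condition and can be reduced to the avoiding case by integrality). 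Hence 2-neighborliness is equivalent to $\sum_{j=i+1}^{i+n-1} m_j \ge 3$ for every $i$. The hard part, and the main obstacle, is verifying that this ``open half-plane'' formulation captures \emph{all} 2-element vertex sets $T$ — in particular, handling the cases where $T$ straddles a diameter boundary or includes the center, which is a short case analysis using P2–P4.
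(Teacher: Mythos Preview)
The paper does not prove this theorem: it is quoted verbatim from \cite[Sec.~6.3]{Grunbaum:2003} and used as a black box, so there is no in-paper argument to compare against. Your sketch is a faithful reconstruction of the standard Gale-transform proof that Gr\"unbaum gives, and your derivation of~N (that 2-neighborliness is equivalent to every open half-plane carrying at least three label units, because removing any two Gale vectors must still leave the origin in the relative interior) is the correct and usual argument.

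Two minor points. First, your explanation of Claim~P is slightly tangled: the clean statement is that if $\bar v_i = 0$, then the \emph{only} minimal cofacet containing the index $i$ is the singleton $\{i\}$ itself (a zero vector cannot participate in a minimal antipodal pair or triangle around the origin), so $v_i$ lies in every facet except the one complementary to $\{i\}$; that facet is the base and $v_i$ is the apex. Your phrasing ``every facet either contains $v_i$ \dots\ or corresponds to a cofacet lying on the $2n$-gon'' does not quite say this. Second, the case analysis you defer at the end (pairs $T$ containing a center point, or pairs lying on a single diameter) is genuinely needed for the ``only if'' direction of~N and is not entirely trivial; it relies on Property~\ref{P4} to guarantee that removing two non-half-plane points cannot collapse the remaining configuration onto a line. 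It is short, but you should actually write it out rather than gesture at it.
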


\begin{lemma}\label{lem:1}
Decreasing a positive label $m_i$ of a 2-neighborly reduced Gale diagram by 1 leads to decreasing the number of cofacets at least by 3.
\end{lemma}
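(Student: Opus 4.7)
Fix a vertex $i$ of the $2n$-gon with $m_i \ge 1$ and set $m \coloneqq m_{i+n}$, $p_a \coloneqq m_{i+a}$, $q_v \coloneqq m_{i+n+v}$ for $a, v \in \{1, \dots, n-1\}$. The first step is to identify which cofacets in the classification of Theorem~\ref{thm:GaleProperties} contain the label at $i$ and therefore lose one count when $m_i$ is reduced by one. Type~(a) (the center) never involves $i$; type~(b) contributes only the diameter $\{i, i+n\}$, whose count $m_i m$ drops by $m$; type~(c) contributes every triangle through $i$ whose interior contains $C$. A short arc-length check on the regular $2n$-gon shows that these triangles are exactly $\{i,\,i+a,\,i+n+v\}$ with $1 \le v < a \le n-1$. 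Consequently the total decrease of cofacets equals
\[
D \;=\; m \;+\; T, \qquad T \;=\; \sum_{1 \le v < a \le n-1} p_a q_v \;=\; \sum_{v=1}^{n-2} q_v N_v, \qquad N_v \coloneqq p_{v+1} + \dots + p_{n-1},
\]
and the lemma reduces to the inequality $D \ge 3$.

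The second step picks a single well-chosen window at which to apply 2-neighborliness. The inequality $\sum_{j=k+1}^{k+n-1} m_j \ge 3$ from condition~(N), taken at $k = i + n$, gives $q_1 + \dots + q_{n-1} \ge 3$, so there is a smallest index $v^{\ast} \in \{1, \dots, n-1\}$ with $q_{v^{\ast}} \ge 1$. Applying the same inequality at $k = i + v^{\ast}$ and regrouping the terms according to whether they belong to the $p$-side, equal~$m$, or belong to the $q$-side yields
\[
N_{v^{\ast}} \;+\; m \;+\; (q_1 + q_2 + \dots + q_{v^{\ast}-1}) \;\ge\; 3;
\]
the minimality of $v^{\ast}$ forces the parenthesised sum to vanish, so $N_{v^{\ast}} \ge 3 - m$.

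Combining $q_{v^{\ast}} \ge 1$ with this lower bound gives $T \ge q_{v^{\ast}} N_{v^{\ast}} \ge 3 - m$, whence $D = m + T \ge 3$, and the lemma follows. The main subtlety is the adaptive choice of $v^{\ast}$: a fixed starting index (say $k = i + 1$, i.e.\ the choice $v = 1$) is insufficient in the corner case $q_1 = 0$, whereas taking $v^{\ast}$ to be the position of the first nonzero $q_v$ exactly kills the sum $q_1 + \dots + q_{v^{\ast}-1}$ and collapses the whole estimate to a single application of~(N). The arc-length classification of the triangles through $i$ that contain $C$ is routine but must be recorded carefully so that the parametrisation $1 \le v < a \le n-1$ comes out correctly.
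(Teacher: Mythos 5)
Your proof is correct, and it is essentially the mirror image of the paper's argument rather than a copy of it. You first classify \emph{all} cofacets through the vertex $i$ exactly --- the diameter $\{i,i+n\}$ and the triangles $\{i,\,i+a,\,i+n+v\}$ with $1\le v<a\le n-1$ (your arc-length check is right, including the implicit exclusion of triples containing $i+n$, for which $C$ lies on an edge rather than in the interior) --- and so obtain the exact decrease $D=m+\sum_v q_v N_v$; you then pick the \emph{first} positive label $q_{v^{\ast}}$ on the arc opposite to $i$ and apply condition N of Theorem~\ref{thm:GaleProperties} once, at $k=i+v^{\ast}$, where minimality of $v^{\ast}$ annihilates the tail $q_1+\dots+q_{v^{\ast}-1}$ and yields $N_{v^{\ast}}+m\ge 3$, whence $D\ge m+q_{v^{\ast}}N_{v^{\ast}}\ge m+N_{v^{\ast}}\ge 3$; the corner case $v^{\ast}=n-1$, where $N_{v^{\ast}}=0$, simply forces $m\ge 3$ and is still covered. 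The paper works instead on the arc strictly between $i$ and $i+n$: it takes the first \emph{and} last positive labels there, $m_{j_{\min}}$ and $m_{j_{\max}}$, exhibits three pairwise disjoint families of destroyed cofacets ($\{i,i+n\}$; $\{i,j_{\min},k\}$ with $i+n<k<j_{\min}+n$; and $\{i,j_{\max},k\}$ with $j_{\min}+n\le k<j_{\max}+n$), and notes that their total dominates $\sum_{s=i+n}^{j_{\max}+n-1} m_s$, which by maximality of $j_{\max}$ equals the N-window at $j_{\max}$ and is at least $3$. Both arguments thus reduce to a single application of N after an adaptive choice of pivot (your flagged subtlety --- that the fixed choice $v=1$ fails when $q_1=0$ --- is exactly what the paper's maximality of $j_{\max}$ handles on its side, by padding the window with zeros). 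Your route needs only one pivot and delivers the exact value of the decrease, a slightly stronger statement than the lemma, at the cost of carrying out the full triangle classification; the paper's route needs two pivots but gets away with a pure lower bound, never computing $D$ exactly, and is correspondingly shorter.
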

\begin{figure}
	\centering
\newcommand{\smallradius}{.07}
\newcommand{\point}{circle (\smallradius) }
\newcommand{\bradius}{2}
\newcommand{\Bradius}{2}
\newcommand{\diameter}[1]{\draw [dashed, thin] (#1:\bradius) -- ({180+#1}:\bradius);}
\tikzset{filled/.style={draw=black, fill=black}}
\tikzset{semifilled/.style={draw=black, fill=white}} 
\tikzset{empty/.style={draw=black, densely dotted, fill=white}}
		\begin{tikzpicture}[scale=0.7, fill=black, draw=black] 
\filldraw [empty, dotted] (0, 0) circle (\bradius);
\diameter{90}
\diameter{30}
\diameter{-60}
\filldraw [filled] (-90:\bradius) \point node[anchor=north] {$m_{i}$};
\filldraw [filled] (180+30:\bradius) \point node[anchor=north east] {$m_{j_{\min}}$};
\filldraw [filled] (120:\bradius) \point node[anchor=south east] {$m_{j_{\max}}$};
\filldraw [semifilled] (90:\bradius) \point node[anchor=south] {$m_{i+n}$};
\filldraw [semifilled] (30:\bradius) \point node[anchor=south west] {$m_{j_{\min}+n}$};
\filldraw [semifilled] (180+120:\bradius) \point node[anchor=north west] {$m_{j_{\max}}+n$};
\filldraw [draw=white, draw opacity = 0, fill=gray, fill opacity=0.3] %
(118:{0.1 + \bradius}) arc (120:-60:{0.1 + \bradius})  -- cycle;
\end{tikzpicture}
	\caption{The sum of lables in the gray semicircle is not less than 3.}\label{fig:2}
\end{figure}
\begin{proof}
Let $m_i > 0$ and 
\begin{align*}
	j_{\min} &= \min \Set*{s \in \{i+1, \dots, i + n - 1\} \given m_s > 0},\\
	j_{\max} &= \max \Set*{s \in \{i+1, \dots, i + n - 1\} \given m_s > 0},
\end{align*}
where $n$ is the number of diameters of a diagram.
We have to show that there are at least 3 cofacets with a point at the vertex $i$ of the $2n$-gon.
Let us consider three types of cofacets (see Fig.~\ref{fig:2}):
\begin{enumerate}
	\item $\{i, i+n\}$ (there are exactly $m_{i+n}$ such cofacets for the given point);
	\item $\{i, j_{\min}, k\}$ for $i + n < k < j_{\min} + n$ (there are at least $\sum_{s = i+n+1}^{j_{\min} + n-1} m_s$ such cofacets);
	\item $\{i, j_{\max}, k\}$ for $j_{\min} + n \le k < j_{\max} + n$ (there are at least $\sum_{s = j_{\min}+n}^{j_{\max} + n-1} m_s$ of them).
\end{enumerate}
Obviously, these three sets of cofacets are pairwise distinct and the number of all of them is not less than
$\sum_{s = i+n}^{j_{\max} + n-1} m_s = \sum_{s = j_{\max} + 1}^{j_{\max} + n-1} m_s$.
But the last sum cann't be less than 3 by~the~property~N.
\end{proof}
	
Thus, if we are interested in considering only diagrams with a small (w.r.t. dimension) number of cofacets, then it is sufficient to find the minimal (by inclusion) diagrams. Evidently, minimal 2-neighborly diagrams have the following properties:
\begin{align*}
m(C) &= 0,\\
m_i &\le 3, \quad i\in[2n],\\ 
n   &\le 7. 
\end{align*}
It is easy to enumerate all such diagrams by a computer.
There are 4 such diagrams (we give vectors $(m_0, \dots, m_{2n-1})$):
\begin{enumerate}
	\item $(3, 3, 3, 3)$ encodes a 9-polytope $P_{9, 12, 18}$ with 12 vertices and 18 facets.
	\item $(1, 2, 1, 2, 1, 2)$ encodes a 6-polytope $P_{6, 9, 15}$ with 9 vertices and 15 facets.
	\item $(1, 1, 1, 1, 1, 1, 1, 1)$ encodes a 5-polytope $P_{5, 8, 12}$ with 8 vertices and 12 facets.
	\item $(0, 1, 1, 1, 0, 1, 1, 1, 1, 1)$ encodes a 5-polytope $P_{5, 8, 14}$ with 8 vertices and 14 facets.
\end{enumerate}
For the first, second and forth ones, we have ``facets $=$ $d+9$''.
From Lemma~\ref{lem:1} we know, that adding a point to such a diagram increases the number of (co)facets at least by 3. Hence, some new nonminimal diagrams with ``facets $\le$ $d+9$'' could be obtained only by adding points to the third diagram. It is easy to verify, that adding a point to the third diagram increases the number of (co)facets at least by 4. Therefore, we listed all combinatorial types of 2-neighborly $d$-polytopes with $d+3$ vertices and at most $d+9$ facets. Surely, we have to add $k$-fold pyramids, $k \in \N$, over them to this list.

It is interesting to remark, that all these polytopes can be realized as 0/1-polytopes (see Fig.~\ref{fig:3}). 
\begin{figure}
	\begin{minipage}[t]{.19\linewidth}
		\centering
		(0, 0, 0, 0, 0),\\
		(0, 0, 0, 0, 1),\\
		(0, 0, 0, 1, 0),\\
		(0, 0, 1, 0, 0),\\
		(0, 1, 0, 0, 1),\\
		(0, 1, 1, 1, 0),\\
		(1, 0, 0, 1, 0),\\
		(1, 0, 1, 0, 1).
		\subcaption{$P_{5,8,12}$}\label{fig:P5812}
	\end{minipage}%
	\begin{minipage}[t]{.19\linewidth}
		\centering
		(0, 0, 0, 0, 0),\\
		(0, 0, 0, 0, 1),\\
		(0, 0, 0, 1, 0),\\
		(0, 0, 1, 0, 1),\\
		(0, 1, 0, 1, 1),\\
		(0, 1, 1, 0, 0),\\
		(1, 0, 0, 1, 1),\\
		(1, 1, 0, 0, 0).
		\subcaption{$P_{5,8,14}$}\label{fig:P5814}
	\end{minipage}
	\begin{minipage}[t]{.22\linewidth}
		\centering
		(0, 0, 0, 0, 0, 0),\\
		(0, 0, 0, 0, 0, 1),\\
		(0, 0, 0, 0, 1, 0),\\
		(0, 0, 0, 1, 0, 0),\\
		(0, 0, 1, 0, 0, 0),\\
		(0, 1, 0, 0, 0, 0),\\
		(1, 0, 0, 0, 0, 1),\\
		(1, 0, 0, 1, 1, 0),\\
		(1, 1, 1, 0, 0, 0).
		\subcaption{$P_{6,9,15}$}\label{fig:P6915}
	\end{minipage}
\begin{minipage}[t]{.31\linewidth}
	\centering
	(0, 0, 0, 0, 0, 0, 0, 0, 0),\\
	(0, 0, 0, 1, 0, 0, 0, 0, 0),\\
	(0, 0, 1, 0, 0, 0, 0, 0, 0),\\
	(0, 1, 0, 0, 0, 0, 0, 0, 0),\\
	(1, 0, 0, 1, 0, 0, 0, 0, 0),\\
	(1, 1, 1, 0, 0, 0, 0, 0, 0),\\
	(0, 0, 0, 0, 1, 0, 0, 0, 0),\\
	(0, 0, 0, 0, 1, 0, 0, 0, 1),\\
	(0, 0, 0, 0, 1, 0, 0, 1, 0),\\
	(0, 0, 0, 0, 1, 0, 1, 0, 0),\\
	(0, 0, 0, 0, 1, 1, 0, 0, 1),\\
	(0, 0, 0, 0, 1, 1, 1, 1, 0).
	\subcaption{$P_{9,12,18}$}\label{fig:P91218}
\end{minipage}
	\caption{Examples of $d$-dimensional 2-neighborly 0/1-polytopes $P_{d,v,f}$ with $v = d+3$ vertices and $f$ facets.}\label{fig:3}
\end{figure}

\section{Incidence matrices of 2-neighborly polytopes}  
\label{sec:IncMat}

Let $P$ be a $d$-polytope with $n$ vertices $\{v_1, \dots, v_n\}$ and $k$ facets $\{F_1, \dots, F_k\}$.
\emph{A facet-vertex incidence matrix} of $P$ is a matrix $M \in \{0,1\}^{k \times n}$ with $M_{ij} = 1$ for $v_j \in F_i$ and $M_{ij} = 0$ otherwise.
In the following, we say that 0/1-row (column) $x$ is \emph{a subset} of a 0/1-row (column) $y$ if
the set of 1's in $x$ is a subset of the set of 1's in $y$.

\SetKwProg{Proc}{Procedure}{}{end} 
\SetKwProg{Fn}{Function}{}{end} 
\SetKwInOut{Input}{Input}
\SetKwInOut{Output}{Output}
\SetKwFunction{GetF}{get\_facet}%
\SetKwData{Mat}{matrix}
\begin{algorithm}
	\caption{The extraction of the incidence matrix of a facet from the given 0/1-matrix} 
	\label{alg:facet}
	\SetKwData{i}{i}
	\SetKwData{A}{A}
	\Input{\Mat and a row number \i}
	\Output{the matrix \A extracted from the \i-th row of \Mat}
	\BlankLine
	\Fn{\GetF{\Mat, \i}}{
		\A is an empty matrix\;
		\For{every column $c$ in \Mat}{
			\lIf{$c[\i] = 1$}{append $c$ to \A}
		}
		\For{every row $r$ in \A}{
			\lIf{$r$ is a subset of another row in \A}{remove $r$ from \A}
		}
		\Return{\A}\;
	}
\end{algorithm}

A facet-vertex incidence matrix $M$ of $P$ contains all information about the combinatorial structure of $P$.
In particular, a facet-vertex incidence matrix of a $i$-th facet of $P$ can be extracted from $M$ by a~2-stage process (see Algorithm~\ref{alg:facet}). 
Firstly, we have to remove from $M$ the columns with 0's in the $i$-th row. 
Finally, we remove from $M$ every row that is a subset of some other row.
I.e., an incidence matrix of a facet of $P$ is a submatrix (up to a permutation of rows and columns) of an incidence matrix of $P$ (see, e.g., Fig.~\ref{fig:facet}).
\begin{figure}
	\centering
	\begin{tabular}{c|lcc|}
		\multicolumn{1}{c}{} & 
		$v_1$ $v_2$ \hfill \dots \hfill{} & $v_{n-1}$ & 
		\multicolumn{1}{c}{$v_n$}\\
		\cline{2-4}
		\begin{tabular}{c}
			$F_1$\\
			$F_2$\\
			\vdots\\
		\end{tabular}
		&
		\multicolumn{1}{c|}{
			\parbox[c]{3cm}{\centering An incidence\\ matrix of $F_k$}
		}
		&	
		\begin{tabular}{c}
			$\star$\\ $\star$\\	\vdots\\
		\end{tabular}
		&
		\begin{tabular}{c}
			$\star$\\	$\star$\\	\vdots\\
		\end{tabular}
		\\
		\cline{2-2}
		& \,$\star$\; $\star$ \hfill \dots \hfill$\star$\;  $\star$ & $\star$ & $\star$ \\ 
		$F_k$ &  \,1\; 1 \hfill \dots \hfill 1\; 1 & 0 & 0\\
		\cline{2-4}
	\end{tabular}
	\caption{A facet-vertex incidence matrix $M$ of a polytope $P$ and an incidence matrix of its facet $F_k$}\label{fig:facet}
\end{figure}

2-Neighborlyness of a polytope $P$ means that the intersection of any two columns of a matrix $M$ is not a subset of any other column. In the following, if a matrix has this property, we call it \emph{2-neighborly}.
Let us list some obvious properties of an incidence matrix $M$ of a $d$-polytope:
\begin{enumerate}
	\item Every row and every column of $M$ has at least $d$ 1's.
	\item Any row is not a subset of another row. The same is true for columns.
\end{enumerate}
Thus, if we are looking for a polytope with a given number of vertices, facets, and some other combinatorial properties, then we can try to enumerate all 0/1-matrices with these properties.

It is known, that a 2-neighborly 4-polytope with $v$ vertices has exactly $v(v-3)/2$ facets.
Hence, there are only two combinatorial types of such polytopes satisfying the restriction ``facets${} \le d+9$'': the simplex $P_{4,5,5}$ and the polytope $P_{4,6,9}$.
Suppose, we want to list all combinatorial types of 2-neighborly 5-polytopes with facets${} \le d+9$.
Then, all facets of such a polytope $P$ must be combinatorially equivalent to $P_{4,5,5}$ or $P_{4,6,9}$ (otherwise, the number of facets of $P$ will be greater than 14).
For example, we want to find such a polytope $P$ with 7 vertices and 10 facets.
So, we can start from the matrix on Fig.~\ref{fig:facet}, where $n = 7$, $k = 10$, and an incident matrix of $F_k$ is a matrix for $P_{4,5,5}$ or $P_{4,6,9}$. 
Replacing the stars with zeros and ones, and checking the listed above properties of the matrix, we will finally find all suitable combinatorial types. Of course, there may be present nonrealizable types, but we have not found such examples in our experiments.


\SetKwData{Fl}{Flist} 
For enumerating all 0/1-matrices with these properties by a computer, we use Algorithms~\ref{alg:facet}--\ref{alg:enumeration2}.
\SetKwData{Pl}{Plist}
\SetKwData{fc}{feas\_cols}
\SetKwData{num}{m}
\SetKwArray{isf}{isF}
\SetKwFunction{addV}{add\_vertex}%
\begin{algorithm}
	\caption{The~enumeration of~facet-vertex incidence matrices with a given facet} 
	\label{alg:enumeration}
	\SetKwData{F}{F} 
	\SetKwData{Dim}{dim}
	\SetKwData{Vrt}{vrt}
	\SetKwData{Fct}{fct}
	\SetKwData{mfv}{minfv}
	\SetKwData{fr}{feas\_rows}
	\SetKwFunction{Find}{find\_matrices}%
	\SetKwFunction{GenFR}{gen\_feasible\_rows}%
	\SetKwFunction{GenFC}{gen\_feasible\_columns}%
	\Input{ a list of incidence matrices of facets \Fl, a matrix \F from \Fl, a~dimension \Dim, a~number of vertices (columns) \Vrt, a~number of facets (rows) \Fct, and a~minimal number of facets incident to a~vertex \mfv}
	\Output{ the list \Pl of incidence matrices of~2-neighborly \Dim-polytopes with \Vrt vertices and \Fct facets (one of the facets is \F)}
	\BlankLine
	\Fn{\Find{\Fl, \F, \Dim, \Vrt, \Fct, \mfv}}{
		$M \coloneqq \F$\;
		$\fr \coloneqq$ \GenFR{\F, $\Dim - (\Vrt - \vertices(\F))$}\;
		$m \coloneqq \Fct - \facets(\F) - 1$\;
		\For{every multisubset $S$ of size $m$ from \fr}{
			$M' \coloneqq M$\;
			append the rows from $S$ to $M'$\;
			append the row $(1,\dots,1)$ to $M'$\;
			\If{the number of 1's in a column in $M'$ less than \mfv}{
				\tcp{the subset $S$ is not suitable}
				continue\;
			}	
			$\fc \coloneqq$ \GenFC{$M'$, \mfv}\;
			$\isf = (0,\dots,0,1)^T$\;
			\addV{$M'$, $\Vrt - \vertices(\F)$, \fc, \Fl, \Pl, \isf}\;
		}
		canonize matrices in \Pl\;
		sort \Pl and remove duplicates\;
		\Return{\Pl}\;
	}
	\BlankLine
	\Fn{\GenFR{\Mat, \num}}{
		Return the set of 0/1-rows of the length $\cols(\Mat)$ with the properties\;
		a) the number of 1's is not less than \num\;
		b) a row must be a proper subset of some row in \Mat\;
	}
	\BlankLine
	\Fn{\GenFC{\Mat, \num}}{
		Return the set of 0/1-columns of length $\rows(\Mat)$ with the properties\;
		a) the last entry is 0\;
		b) the number of 1's is not less than \num\;
		c) \Mat with the column is 2-neighborly\;
	}
\end{algorithm}
\begin{algorithm}
	\caption{Continuation of the algorithm \ref{alg:enumeration}} 
	\label{alg:enumeration2}
	\SetKwArray{isfn}{isFnew}
	\Input{a~\Mat, a~number of additional columns \num, a~list of feasible columns \fc, a~list of facets \Fl, a~list of good matrices \Pl, and a~0/1-column \isf}
	\BlankLine
	\Proc{\addV{\Mat, \num, \fc, \Fl, \Pl, \isf}}{
		\eIf{$\num > 1$}{
			\For{$c \in \fc$}{
				\lIf{$c \cap \isf \neq \emptyset$}{continue}
				\lIf{$\Mat \cup c$ is not 2-neighborly}{continue}
				$\isfn \coloneqq \isf$\;
				\For{$i \in [\rows(\Mat)]$}{
					\If{$c[i] = 1$}{
						$A \coloneqq$ \GetF{$\Mat \cup c, i$}\;
						\lIf{$A \in \Fl$}{$\isfn{i} \coloneqq 1$}
					}
				}
				\addV{$\Mat \cup c$, $\num - 1$, $\fc \setminus c$, \Fl, \Pl, \isfn}\;
			}
		}
		{
			$c \coloneqq \bm{1} - \isf$\;
			$\Mat \coloneqq \Mat \cup c$\;
			\lIf{$\Mat$ is not 2-neighborly}{\Return}
			\lIf{there is a row in $\Mat$ that is a subset of some other row}{\Return}
			\For{$i \in [\rows(\Mat)]$}{
				\If{$c[i] = 1$}{
					$A \coloneqq$ \GetF{$\Mat, i$}\;
					\lIf{$A \in \Fl$}{\Return}
				}
			}
			append \Mat to \Pl\;
		}	
	}
\end{algorithm}
The input of the algorithm is: a~list \Fl of incidence matrices of feasible facets (2-neighborly $(d-1)$-polytopes), a~matrix from the list (a~must-have facet), a~dimension~$d$, a~number of vertices, a~number of facets, and a~minimal number of 1's in a~column of a~resulting matrix (for a 2-neighborly polytope $P$, this number is equal to $d$ only if $P$ is a simplex).
The output is a list of incidence matrices of 2-neighborly $d$-polytopes with these properties.
In the realization of the algorithm, we use some additional optimizations.
For example, the checking if a given matrix $A$ is in the list \Fl is computationally expensive, since we have to compare matrices modulo a permutation of rows and columns.
It may be done by computing the~canonical form of~a row-column digraph of~a~matrix by using \emph{bliss}~\cite{bliss}. 
But we have found the faster method. 
For every $M \in \Fl$, we make a preprocessing: for every permutation of columns in $M$ we lexicographically sort rows and keep all these matrices in the set \Fl (without duplicates).
When we need to check $A \in \Fl$, it is sufficient to sort rows of $A$ and check $A \in \Fl$ by a binary search.
Our experiments show that this works ten times faster than~\emph{bliss}.
The computer program is available at \url{https://github.com/maksimenko-a-n/2neighborly-inc-matrices}.

The results of using this program are described in the next section.

\section{d-Polytopes with at least d + 4 vertices}  
\label{sec:d+4}

\subsection{Vertex figures of a 2-neighborly polytope}  

\emph{A vertex figure} of a polytope $P$ is an intersection of $P$ with a hyperplane $H$, where $H$ strictly separates one vertex of $P$ from all others.

\begin{lemma}
	\label{lem:vfig}
	A vertex figure of a 2-neighborly $d$-polytope $P$ with $n$ vertices is a 2-simplicial $(d-1)$-polytope $Q$ with $n-1$ vertices.
\end{lemma}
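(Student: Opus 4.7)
The plan is to use the standard correspondence between faces of the vertex figure $Q$ at a vertex $v \in P$ and faces of $P$ that contain $v$. Specifically, if $H$ is a hyperplane strictly separating $v$ from the other vertices of $P$, then $Q = P \cap H$ is a $(d-1)$-polytope whose $k$-faces are precisely the intersections $F \cap H$ with $F$ a $(k+1)$-face of $P$ containing $v$. This is a classical fact (see, e.g., \cite[Sec.~3.1]{Grunbaum:2003}), and the rest of the argument consists of applying the 2-neighborlyness hypothesis to the cases $k = 0$ and $k = 2$.

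First I would handle the vertex count. A vertex of $Q$ corresponds to a 1-face (edge) of $P$ incident to $v$. Because $P$ is 2-neighborly, every other vertex $u$ of $P$ forms an edge $\{v,u\}$ of $P$, so the edges of $P$ at $v$ are in bijection with $\vertices(P) \setminus \{v\}$. Hence $\vertices(Q) = n - 1$.

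Next, to show $Q$ is 2-simplicial, I would look at a 2-face $G$ of $Q$. By the correspondence, $G = F \cap H$ for some 3-face $F$ of $P$ that contains $v$. As noted in the introduction, every face of a 2-neighborly polytope is 2-neighborly, so $F$ is a 2-neighborly 3-polytope; the only such 3-polytope is the tetrahedron, i.e.\ the 3-simplex (this is the $3$-simpliciality of $P$ already recorded in the introduction). The vertex figure of a 3-simplex at one of its vertices is a triangle, so $G$ is a 2-simplex. Thus every 2-face of $Q$ is a simplex, which is the definition of 2-simpliciality.

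The argument has no real obstacle: the two ingredients are the face-lattice description of a vertex figure (taken as standard) and the already-recorded consequence that 2-neighborlyness forces 3-simpliciality. The only minor care point is making sure the hyperplane $H$ is chosen to strictly separate $v$ from the remaining vertices so that the face correspondence holds verbatim, but this is guaranteed by the very definition of a vertex figure used in the lemma.
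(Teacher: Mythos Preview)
Your argument is correct and follows exactly the route the paper indicates: the paper simply notes that the lemma follows from the 3-simpliciality of a 2-neighborly polytope, and your proposal is a fleshed-out version of that same idea (edges at $v$ give the $n-1$ vertices, and 3-faces through $v$ being simplices gives 2-simpliciality of $Q$).
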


The correctness of the lemma follows from the 3-simpliciality of a 2-neighborly polytope.

\begin{lemma}
	For $d \ge 3$, a 2-simplicial $d$-polytope with at least $d+3$ vertices has at least $d+4$ facets.
\end{lemma}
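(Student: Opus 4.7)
The plan is to prove the contrapositive: if $Q$ is a 2-simplicial $d$-polytope with at most $d+3$ facets, then $Q$ has at most $d+2$ vertices. I would dualise, studying $Q^*$, which has $f\le d+3$ vertices and, being the dual of a 2-simplicial polytope, is 2-simple (every $(d-3)$-face is contained in exactly three facets). I would then split on the value of $f$ using the Gale-diagram classification of polytopes with few vertices.

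If $f=d+1$ then $Q^*$, and hence $Q$, is a simplex, giving $n=d+1$. If $f=d+2$, the Gale diagram describes $Q^*$ as an $m_0$-fold pyramid over a direct sum $\Delta^{m_1-1}\oplus\Delta^{m_{-1}-1}$ with $m_1,m_{-1}\ge 2$ and $m_0+m_1+m_{-1}=d+2$; dually, $Q$ is the $m_0$-fold pyramid over the product $\Delta^{m_1-1}\times\Delta^{m_{-1}-1}$. Since both simplex factors have positive dimension, the product contains an edge-times-edge quadrilateral $2$-face, and the iterated pyramid construction only adjoins triangular new $2$-faces, so $Q$ is not 2-simplicial. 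Thus $f=d+2$ is ruled out entirely.

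For $f=d+3$ I would induct on $d$. The base $d=3$ is immediate: a 2-simplicial $3$-polytope is simplicial, Euler gives $f=2n-4$, and $f=6$ forces $n=5\le d+2$. In the inductive step, if $Q$ is a pyramid over a $(d-1)$-polytope $R$, then $R$ is also 2-simplicial (the new $2$-faces of a pyramid are all triangles) with $f_R=d+2=(d-1)+3$ and $n_R=n-1$; by the induction hypothesis in dimension $d-1$, $n_R\le(d-1)+2$, whence $n\le d+2$.

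The main obstacle is the non-pyramid subcase with $f=d+3$. There $Q^*$ has $d+3$ vertices and is not a pyramid, so by Theorem~\ref{thm:GaleProperties} its reduced Gale diagram has centre label $m(C)=0$. To close this case I would enumerate all reduced Gale diagrams with total label $d+3$, $m(C)=0$, satisfying properties~P1--P4, and with at least $d+3$ cofacets (so that $n\ge d+3$), verifying that none of the resulting polytopes $Q^*$ is 2-simple. The constraints sharply bound the possible diagrams (in the spirit of the minimal-diagram argument of Lemma~\ref{lem:1}), so the enumeration is small; the technical crux is translating the 2-simpleness condition on $Q^*$ into the cofacet combinatorics of the reduced Gale diagram and verifying its failure in each configuration.
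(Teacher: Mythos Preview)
Your reduction to the contrapositive, the dualisation, and the treatment of $f\le d+2$ and of the pyramid subcase when $f=d+3$ are all correct and cleanly argued. The genuine gap is the non-pyramid subcase with $f=d+3$. You propose to enumerate the reduced Gale diagrams of $Q^*$ with $m(C)=0$, at least $d+3$ cofacets, and then check that none is $2$-simple. But the total label of that diagram is $d+3$, so the family of diagrams is not finite as $d$ varies; an enumeration ``in the spirit of Lemma~\ref{lem:1}'' does not obviously apply, since that lemma bounds \emph{minimal} diagrams via an upper bound on cofacets, whereas here you need a \emph{lower} bound on cofacets. Moreover, you have not formulated the $2$-simpleness condition in Gale terms, and this translation (into which triples of cofacets can share a common $(d-3)$-face) is where the real work lies. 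As it stands, the key step is a plan rather than a proof.

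By contrast, the paper avoids Gale diagrams entirely and proves the stronger structural statement that any $2$-simplicial $d$-polytope with at most $d+3$ facets is either a simplex or a $(d-3)$-fold pyramid over a triangular bipyramid. The induction runs through the facets of $P$ (themselves $2$-simplicial of dimension $d-1$ with at most $(d-1)+3$ facets), uses Barnette's lower bound theorem to dispose of the simplicial case, and, when some facet $F$ is the iterated pyramid over the bipyramid, counts the facets incident to an apex of $F$ to force $P$ to be a pyramid over $F$. This facet-based induction sidesteps the Gale bookkeeping altogether and would close your remaining case directly.
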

\begin{proof}
	The lemma is a consequence of the following statement.
	If $P$ is a 2-simplicial $d$-polytope with at most $d+3$ facets, then it is a $d$-simplex or a $(d-3)$-fold pyramid over a triangular bipyramid.
	We prove it by induction on $d$.
	
	It is easy to check, that there are only two 2-simplicial 3-polytopes with at most 6 facets: a~tetrahedron and a triangular bipyramid $B$.
	Hence, for $d = 3$ the statement is true. It is also easy to check for $d = 4$.
	Suppose now, that the statement is true for $d = k \ge 4$.
	Consider a~2-simplicial $(k+1)$-polytope $P$ with at most $k+4$ facets.
	Every its facet have to be a 2-simplicial $k$-polytope with at most $k+3$ facets.
	By the induction hypothesis, there are only two such polytopes: a $k$-simplex and a $(k-3)$-fold pyramid over $B$.
	If $P$ has no a facet that is a $(k-3)$-fold pyramid over $B$, then $P$ is a $(k+1)$-simplex or $P$ is simplicial and has at least $2k+2 > k + 4$ facets, by the lower bound theorem~\cite{Barnette:1971}.
	Else, if a $(k-3)$-fold pyramid over $B$ is a facet $F$ of $P$, then $P$ is a pyramid over $F$ or every vertex of $P$ is not incident with at least two facets.
	In the last case, we can consider an apex of $F$ (since $F$ is a pyramid). 
	It is incident with $k+3$ facets of $P$ and is not incident with at least two. 
	Hence, the number of facets of $P$ is at least $k+5$.
\end{proof}

\begin{corollary}
	\label{cor:1}
	Every vertex of a 2-neighborly $d$-polytope with at least $d+3$ vertices is incident with at least $d+3$ facets.
\end{corollary}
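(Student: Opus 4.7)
The plan is to deduce the corollary directly from the two preceding lemmas by passing to a vertex figure. Let $P$ be a 2-neighborly $d$-polytope with $n \ge d+3$ vertices, and fix any vertex $v$ of $P$. Let $Q$ be a vertex figure of $P$ at $v$, obtained by intersecting $P$ with a hyperplane that strictly separates $v$ from the remaining $n-1$ vertices.

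First I would invoke Lemma~\ref{lem:vfig} to conclude that $Q$ is a 2-simplicial $(d-1)$-polytope whose vertices are in bijection with the edges of $P$ through $v$; since $P$ is 2-neighborly, every other vertex of $P$ shares an edge with $v$, so $Q$ has exactly $n-1$ vertices. The hypothesis $n \ge d+3$ then gives $n-1 \ge d+2 = (d-1)+3$, which is exactly the lower bound needed to feed $Q$ into the preceding lemma.

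Next, since we are in the regime $d \ge 4$ (the paper restricts attention to $d \ge 4$), we have $d-1 \ge 3$, so the preceding lemma applies to $Q$ and yields $\facets(Q) \ge (d-1)+4 = d+3$.

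Finally, I would close the argument with the standard correspondence between the facets of a vertex figure and the facets of $P$ incident to $v$: each facet of $Q$ is the intersection of the separating hyperplane with some facet of $P$ containing $v$, and this map is a bijection. Hence the number of facets of $P$ incident to $v$ equals $\facets(Q) \ge d+3$, proving the corollary. There is no real obstacle here; the only point that warrants a moment's care is verifying that the dimension bound $d \ge 4$ (and thus $d-1 \ge 3$) lets us apply the previous lemma, and that the vertex count of $Q$ indeed equals $n-1$, which is where 2-neighborliness of $P$ is used.
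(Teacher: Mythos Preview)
Your argument is correct and is exactly the intended one: the paper states this as an immediate corollary of the two preceding lemmas, and your write-up spells out precisely that deduction via the vertex figure. There is nothing to add.
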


\subsection{4- and 5-Polytopes with at most 14 facets}  

There are only three combinatorial types of 2-neighborly 4-polytopes with at most 14 facets:
$4$-simplex, a polytope $P_{4,6,9}$, and a cyclic polytope $P_{4,7,14}$ with 7 vertices and 14 facets.

All combinatorial types of 5-polytopes with at most 9 vertices enumerated by Fukuda, Miyata, Moriyama~\cite{Fukuda:2013}. There are no 2-neighborly polytopes with at most 14 facets, except the ones listed in Section~\ref{sec:Gale}: a simplex, 
a pyramid over $P_{4,6,9}$, a simplicial polytope $P_{5,7,12}$,
$P_{5,8,12}$, and $P_{5,8,14}$.

If a 2-neighborly 5-polytope $P$ has at most 14 facets, then it has only two combinatorial types of facets: a $4$-simplex and a polytope $P_{4,6,9}$. (Every facet of $P$ must be 2-neighborly 4-polytope with at most 13 facets.)
Suppose, that $P$ has at least 10 vertices.
By Lemma~\ref{lem:vfig}, every its vertex figure is a 2-simplicial 4-polytope $Q$ with at least 9 vertices.
It is easy to prove, that $Q$ has at least 9 facets.
Hence, every vertex of $P$ is incident with at least 9 facets.
Then $P$ has at least $10 \cdot 9 / 6 = 15$ facets.

\subsection{6-Polytopes with at most 15 facets}  

Now, let $P$ be a 2-neighborly 6-polytope with at least 10 vertices and \emph{at most 15 facets}.
Hence, every facet of $P$ is combinatorially equivalent to one of five polytopes:
a simplex $P_{5,6,6}$, a pyramid over $P_{4,6,9}$, a simplicial polytope $P_{5,7,12}$, $P_{5,8,12}$, and $P_{5,8,14}$ (see Figures~\ref{fig:01} and~\ref{fig:3}).
First of all, we investigate properties of vertex figures of $P$.
We have analyzed the database of 5-polytopes with at most 9 vertices~\cite{Fukuda:2013} and have found the following:
\begin{enumerate}
	\item All 2-simplicial 5-polytopes with 9 vertices have at least 9 facets. Only one polytope has exactly 9 facets and it is 2-simple. Its facet-vertex incidence matrix is shown in Fig.~\ref{fig:99}. All such polytopes with 9 or 10 facets have at least one facet with 7 vertices.
	\item There are no 2-simplicial 5-polytope with at most 9 facets and at least 10 vertices. (To establish this, we searched for 2-simple polytopes with at most 9 vertices.)
\end{enumerate}
\begin{figure}
	\centering
	0 0 1 1 1 1 1 1 1\\
	1 1 0 0 1 1 1 1 1\\
	1 1 1 1 0 0 1 1 1\\
	0 1 1 1 1 1 0 0 1\\
	1 1 0 1 1 1 0 1 0\\
	1 1 1 1 0 1 1 0 0\\
	0 1 0 1 1 0 0 1 1\\
	0 1 1 0 0 1 1 0 1\\
	1 0 0 1 0 1 1 1 0
	\caption{The facet-vertex incidence matrix of a 2-simplicial 5-polytope with 9~vertices and 9~facets}\label{fig:99}
\end{figure}

The first statement means that if a 2-neighborly 6-polytope $P$ has exactly 10 vertices and every its facet has at most 7 vertices, then every its vertex figure has at least 11 facets.
Thus, 
the number of facets of $P$ cann't be less than $\lceil 10 \cdot 11 / 7 \rceil = 16$.
From the second statement, one can deduce the same lower bound for a polytope $P$ with more than 10 vertices. Hence, a 2-neighborly 6-polytope $P$ with at least 10 vertices and at most 15 facets must have at least one facet with 8 vertices. There are only two types of such facets: $P_{5,8,12}$ and $P_{5,8,14}$.

The fact that ``there are no 2-simplicial 5-polytope with at most 9 facets and at least 10 vertices'' means that if a 2-neighborly 6-polytope $P$ has at least 11 vertices, then every its vertex is incident to at least 10 facets. 
Thus, for polytopes with at least 13 vertices, the number of facets cann't be less than $\lceil 13\cdot 10/8 \rceil = 17$.
The cases, when $P$ has 10--12 vertices and at most 15 facets (and at least one of facets is $P_{5,8,12}$ or $P_{5,8,14}$), we check with the computer program described in the previous section. There are only two combinatorial types with these properties: $P_{6,10,14}$ can be realized as a 0/1-polytope and $P_{6,10,15}$ cann't be realized as a 0/1-polytope  (see Fig.~\ref{fig:d6-7}).

\begin{figure}
	\begin{minipage}[t]{.19\linewidth}
		\centering
(0, 0, 0, 0, 0, 0),\\
(0, 0, 0, 0, 0, 1),\\
(0, 0, 0, 0, 1, 0),\\
(0, 0, 0, 1, 0, 0),\\
(0, 0, 1, 0, 0, 0),\\
(0, 1, 0, 0, 0, 0),\\
(1, 0, 0, 0, 1, 1),\\
(1, 0, 1, 1, 0, 0),\\
(1, 1, 0, 1, 0, 1),\\
(1, 1, 1, 0, 1, 0).
		\subcaption{$P_{6,10,14}$}\label{fig:P61014}
	\end{minipage}
\qquad\qquad
	\begin{minipage}[t]{.19\linewidth}
(1, 1, 1, 1, 1, 0),\\
(1, 1, 1, 1, -1, 0),\\
(1, 1, 1, -1, 1, 0),\\
(1, 1, -1, 1, 1, 0),\\
(1, -1, 1, 1, -1, 0),\\
(1, -1, -1, -1, 1, 0),\\
(-1, 1, 1, -1, 1, 0),\\
(-1, 1, -1, 1, -1, 0),\\
(0, 0, 1, 1, 0, 1),\\
(0, 0, -1, 0, 1, 1).
	\subcaption{$P_{6,10,15}$}\label{fig:P61015}
	\end{minipage}
\qquad\qquad
\begin{minipage}[t]{.22\linewidth}
	\centering
	(0, 0, 0, 0, 0, 0, 0),\\
	(0, 0, 0, 0, 0, 0, 1),\\
	(0, 0, 0, 0, 0, 1, 0),\\
	(0, 0, 0, 0, 1, 0, 0),\\
	(0, 0, 0, 1, 0, 0, 0),\\
	(0, 0, 1, 0, 0, 0, 0),\\
	(0, 1, 0, 0, 0, 0, 0),\\
	(1, 0, 0, 0, 0, 1, 1),\\
	(1, 0, 0, 1, 1, 0, 0),\\
	(1, 0, 1, 0, 1, 0, 1),\\
	(1, 0, 1, 1, 0, 1, 0),\\
	(1, 1, 0, 0, 1, 1, 0),\\
	(1, 1, 0, 1, 0, 0, 1),\\
	(1, 1, 1, 0, 0, 0, 0).
	\subcaption{$P_{7,14,16}$}\label{fig:P71416}
\end{minipage}
	\caption{2-neighborly $d$-polytopes with at least $d+4$ vertices.}\label{fig:d6-7}
\end{figure}

Thus, there are exactly 9 combinatorial types of 2-neighborly 6-polytopes with at most 15 facets:
a simplex $P_{6,7,7}$,
a 2-fold pyramid over $P_{4,6,9}$,
a pyramid over $P_{5,7,12}$,
a simplicial polytope $P_{6,8,15}$,
a pyramid over $P_{5,8,12}$,
a pyramid over $P_{5,8,14}$,
$P_{6,9,15}$, 
$P_{6,10,14}$, 
$P_{6,10,15}$. 

\subsection{7-Polytopes with at most 16 facets}  

In section~\ref{sec:Gale}, there are listed all combinatorial types of 2-neighborly $d$-polytopes with at most $d+3$ vertices and $d+9$ facets. In this section we suppose that a 2-neighborly 7-polytope $P$ has at least 11 vertices and at most 16 facets. At first, we show that $P$ has a facet with at least 9 vertices.

\begin{lemma}
	\label{lem:d+1}
	If $P$ is a 2-neighborly $d$-polytope wiht at least $d+4$ vertices and every its facet has at most $d+1$ vertices, then $P$ has at least $2d + 3$ facets.
\end{lemma}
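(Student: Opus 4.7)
The strategy is proof by contradiction: assume $f=\facets(P)\le 2d+2$. By Corollary~\ref{cor:1} each vertex of $P$ lies in at least $d+3$ facets, so a direct vertex–facet incidence count gives
\[
(d+3)(d+4) \le (d+3)n \le \sum_{F}|F| \le (d+1)f \le 2(d+1)^2,
\]
which simplifies to $(d-5)(d+2)\ge 0$. This already settles the lemma for $d\le 4$, and from now on I may assume $d\ge 5$.

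For $d\ge 5$ I would sharpen the per-vertex incidence bound via vertex figures. By Lemma~\ref{lem:vfig} the vertex figure $Q_v$ at a vertex $v\in P$ is a 2-simplicial $(d-1)$-polytope with $n-1\ge d+3$ vertices, and since $P$ is 2-neighborly and every facet of $P$ has at most $d+1$ vertices, every facet of $Q_v$ has at most $d$ vertices. The key intermediate claim I aim for is: \emph{every such $Q_v$ has at least $2d-1$ facets.} Granted this, a second vertex–facet count yields $(d+1)f\ge (2d-1)(d+4)$, hence $f\ge 2d+5-9/(d+1)>2d+2$ for all $d\ge 3$, contradicting the assumption.

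I would prove the intermediate claim by splitting on the simpliciality of $Q_v$. If $Q_v$ is simplicial, Barnette's lower bound theorem~\cite{Barnette:1971} gives
\[
\facets(Q_v)\ge (d-2)(n-1)-d(d-3)\ge 4d-6\ge 2d-1
\]
for $d\ge 3$. If $Q_v$ has a non-simplex facet, the corresponding facet $F$ of $P$ has exactly $d+1$ vertices, and the Gale-diagram description of Section~\ref{sec:Gale} forces $F$ to be an $m_0$-fold pyramid over a 2-neighborly polytope with $m_1+m_{-1}\ge 6$ vertices (so $m_1,m_{-1}\ge 3$ and $m_0+m_1+m_{-1}=d+1$), whence $F$ carries at least $m_0+m_1 m_{-1}\ge d+4$ ridges. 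I would then propagate this structural surplus through $Q_v$ in the spirit of Lemma~\ref{lem:1}, which bookkeeps how many cofacets are forced by each nonzero Gale label, to still obtain $\facets(Q_v)\ge 2d-1$. Executing this propagation uniformly in $d$ is the main obstacle; in the concrete $d=7$ application that follows, one may also fall back on an enumeration of 2-simplicial 6-polytopes with small facets, paralleling the use of the 5-polytope database~\cite{Fukuda:2013} in the preceding 6-polytope section.
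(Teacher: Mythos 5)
Your global double-counting frame is arithmetically sound as far as it goes: the chain $(d+3)(d+4)\le (d+1)f\le 2(d+1)^2$ indeed fails exactly for $d\le 4$, and $(d+1)f\ge (2d-1)(d+4)$ does give $f\ge 2d+5-9/(d+1)>2d+2$; the simplicial-vertex-figure case via \cite{Barnette:1971} is also correct. But the whole argument hinges on the intermediate claim that \emph{every} vertex figure $Q_v$ has at least $2d-1$ facets, and you do not prove it when $Q_v$ is not simplicial --- you only gesture at a propagation ``in the spirit of Lemma~\ref{lem:1}''. That propagation cannot be executed as described: Lemma~\ref{lem:1} lives in the reduced-Gale-diagram setting, which encodes $d$-polytopes with \emph{exactly} $d+3$ vertices, and its cofacet count rests on property~N, i.e.\ on 2-neighborliness. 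Your $Q_v$ has $n-1\ge d+3$ vertices in dimension $d-1$ (at least four more vertices than its dimension, outside the diagram regime) and is only 2-simplicial, not 2-neighborly, so neither hypothesis is available. Note also how far $2d-1$ is beyond what is actually known: the paper's general 2-simplicial lemma gives $Q_v$ only $d+3$ facets (that is precisely Corollary~\ref{cor:1}, which your first count already exhausted), and even in the single dimension $d=6$ the paper had to mine the database~\cite{Fukuda:2013} to push per-vertex bounds to $10$--$11$. Finally, the fallback ``enumeration for the concrete $d=7$ application'' does not repair the lemma: Lemma~\ref{lem:d+1} is invoked verbatim for \emph{all} $d>7$ (it launches the last section's treatment of $d$-polytopes with at most $d+9$ facets), so a uniform-in-$d$ proof is mandatory, and your key claim is left open exactly where you say the main obstacle lies.

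For contrast, the paper closes the lemma with a short local argument that sidesteps vertex figures entirely. If $P$ is simplicial, \cite{Barnette:1971} applied to $P$ itself gives $\facets(P)\ge (d-1)(\vertices(P)-d)+2\ge 4d-2$. Otherwise $P$ has a facet $Q$ with exactly $d+1$ vertices; being a 2-neighborly $(d-1)$-polytope with $(d-1)+2$ vertices, $Q$ has $m_0+m_1m_{-1}\ge d+4$ facets by the count of Section~\ref{sec:Gale}, and each facet of $Q$ is a ridge of $P$ yielding a distinct adjacent facet of $P$. Choosing three vertices $x,y,z$ of $P$ outside $Q$, 2-neighborliness forces $T=\conv\{x,y,z\}$ to be a 2-face of $P$, contained in at least $d-2$ facets; each such facet has at most $d+1$ vertices, three of which are $x,y,z$, hence at most $d-2$ vertices in common with $Q$ --- too few to share a ridge with $Q$, since facets of $Q$ have at least $d-1$ vertices. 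Altogether $\facets(P)\ge \facets(Q)+1+(d-2)\ge 2d+3$. If you wish to salvage your counting route, you must actually prove the $2d-1$ bound for 2-simplicial vertex figures with small facets; as it stands, that claim looks at least as hard as the lemma itself.
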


\begin{proof}
	If $P$ is simplicial, then $\facets (P) \ge (d-1)(\vertices(P) - d) + 2 \ge 4d - 2$, by the lower bound theorem~\cite{Barnette:1971}.
	Suppose, $P$ has a facet $Q$ with $d+1$ vertices. Hence, $\facets(Q) \ge d + 4$.
	Moreover, there are three vertices $x, y, z \in P \setminus Q$, that are separated from the other vertices of $P$. Thus, \[\conv\{x,y,z\} \cap \conv(\vertices P \setminus \{x,y,z\}) = \emptyset.\]
	Let $A$ be an affine hull of $\{x,y,z\}$. Thus, $\dim A = 2$ and $A \cap P = \conv\{x,y,z\}$, since every pair of vertices of $P$ form a 1-face of $P$. Therefore, $T = \conv\{x,y,z\}$ is a~2-face of $P$.
	Hence, there are at least $d-2$ facets of $P$ that contains $T$. Every such a facet $F$ has $\le d+1$ vertices, and three of them are $x, y, z$. Consequently, $F$ does not have a common ridge ($(d-2)$-face) with $Q$. Therefore,
	\[
	\facets(P) \ge \facets(Q) + 1 + d-2 \ge 2d+3.
	\]
\end{proof}

In the following, we consider only $d$-polytopes that have at least one facet with $d+2$ vertices.

\begin{lemma}
	If $P$ is a 2-neighborly $d$-polytope and $P$ is not a pyramid, then for every vertex $v$ there are at least 3 facets that is not incident to $v$.
\end{lemma}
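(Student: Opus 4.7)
The plan is to prove the contrapositive: assuming some vertex $v$ of $P$ has at most two non-incident facets, I will show that $P$ must be a pyramid (and in particular that $|F(v)|=2$ is impossible). The central tool is the standard identity $\{u\}=\bigcap_{F\ni u}F$ valid for every vertex $u$ (every face of a polytope is the intersection of the facets containing it), combined with the 2-neighborly hypothesis in the crucial final case. Throughout, let $F(v)$ denote the set of facets of $P$ not incident to $v$.

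I would first dispose of $|F(v)|=0$: if every facet of $P$ contains $v$, then for any other vertex $u$ all facets through $u$ contain $v$, so their intersection $\{u\}$ contains $v$, forcing $u=v$ and contradicting $\vertices(P)\ge d+1$. Next I would handle $|F(v)|=1$: letting $F$ be the unique facet avoiding $v$, the same argument applied to any $u\ne v$ shows $u\in F$, so $\vertices(P)=\{v\}\cup\vertices(F)$. Since $v\notin F$ lies outside the hyperplane spanned by $F$, the polytope $P=\conv(\{v\}\cup F)$ is a pyramid with apex $v$ and base $F$.

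The hard and only substantive case is $|F(v)|=2$, say $F(v)=\{F_1,F_2\}$. The intersection principle again gives that every vertex $u\ne v$ lies in $\vertices(F_1)\cup\vertices(F_2)$. Because $F_1,F_2$ are distinct facets of the same dimension and each is the convex hull of its vertices, neither vertex set can contain the other; so I may select $u_1\in\vertices(F_1)\setminus\vertices(F_2)$ and $u_2\in\vertices(F_2)\setminus\vertices(F_1)$. The key step is now to invoke 2-neighborliness: the segment $e=\conv\{u_1,u_2\}$ is a 1-face of $P$. Since $F_1$ misses $u_2$ and $F_2$ misses $u_1$, neither member of $F(v)$ contains $e$; hence every facet containing $e$ also contains $v$, and intersecting them all yields $v\in e$, which is impossible because the only vertices of $e$ are $u_1\ne v$ and $u_2\ne v$. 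This contradiction rules out $|F(v)|=2$ and finishes the contrapositive.

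The main obstacle I expect is precisely this last step: arranging the pair $u_1,u_2$ so that their existence together with 2-neighborliness forces the contradiction. Verifying that $\vertices(F_1)\not\subseteq\vertices(F_2)$ for distinct facets (two $(d{-}1)$-faces with nested vertex sets would be forced to coincide, since each facet equals the convex hull of its vertices) is a minor but essential sub-point needed to produce $u_1$ and $u_2$. Once these pieces are in place, the three cases combine cleanly to give the lemma.
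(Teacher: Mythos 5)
Your proof is correct and takes essentially the same route as the paper: in the decisive case of exactly two missing facets $F_1, F_2$, both arguments exhibit two vertices distinct from $v$ such that every facet containing both must contain $v$, so the pair cannot span an edge, contradicting 2-neighborliness. The only real difference is the choice of witnesses --- the paper picks $w_1 \notin F_1$, $w_2 \notin F_2$ (using that facets of a non-pyramid have at most $n-2$ vertices, plus a short sub-argument that $w_1 \neq w_2$), whereas you pick $u_1 \in F_1 \setminus F_2$ and $u_2 \in F_2 \setminus F_1$ from the incomparability of distinct facets, which is slightly cleaner and shows in passing that no vertex of a 2-neighborly polytope can miss exactly two facets even without the non-pyramid hypothesis.
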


\begin{proof}
Let $P$ has $n$ vertices and $k$ facets.
$P$ is not a pyramid. Hence, every facet of $P$ is incident to at most $n-2$ vertices and every vertex is incident to at most $k-2$ facets. Suppose, a vertex $v$ is not incident to exactly two facets $F_1$ and $F_2$.
Since $F_1$ and $F_2$ has at most $n-2$ vertices, then there are vertices $w_1$ and $w_2$ such that $w_1 \notin F_1$, $w_2 \notin F_2$, $w_1 \ne v$, $w_2 \ne v$.
If $w_1 = w_2$, then for every facet $F$ of $P$ we have $w_1 \in F$ $\Rightarrow$ $v \in F$, and $w_1$ is not a vertex. Thus, $w_1 \ne w_2$, and for every facet $F$ of $P$ we have $w_1, w_2 \in F$ $\Rightarrow$ $v \in F$. Therefore, vertices $w_1$ and $w_2$ do not form an edge of $P$, and $P$ is not 2-neighborly.
\end{proof}

\begin{corollary}
	\label{cor:pyr}
	Let $Q$ be a pyramid and a 2-neighborly $(d-1)$-polytope with $k$ facets.
	If $P$ is a 2-neighborly $d$-polytope and $Q$ is a facet of $P$, then $P$ is a pyramid or $P$ has at least $k+3$ facets.
\end{corollary}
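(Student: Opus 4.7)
The plan is to exploit the pyramid structure of $Q$ to locate many facets of $P$ that contain the apex of $Q$, and then invoke the preceding lemma to account for additional facets that avoid the apex.

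First, I would let $a$ denote the apex of $Q$. Since $Q$ is a pyramid with $\facets(Q) = k$, exactly one of its facets, the base, fails to contain $a$, while the remaining $k-1$ side facets all contain $a$. Next, because $Q$ is a facet of $P$, every facet $F$ of $Q$ is a ridge of $P$ and therefore lies in a unique second facet of $P$, which I will call $\bar F \ne Q$. I would then verify that the map $F \mapsto \bar F$ is injective on the facets of $Q$: if $\bar F_1 = \bar F_2$ for two distinct facets $F_1, F_2$ of $Q$, then the intersection $\bar F_1 \cap Q$ would contain both, contradicting that $\dim(\bar F_1 \cap Q) \le d-2$ while each $F_i$ already has dimension $d-2$.

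Having secured injectivity, I would observe that for each of the $k-1$ side facets $F$ of $Q$ the apex $a$ lies in $F$, hence in $\bar F$. Together with $Q$ itself, this produces $k$ pairwise distinct facets of $P$ all incident to $a$. Now suppose $P$ is not a pyramid. Applying the preceding lemma to the vertex $a$ yields at least $3$ facets of $P$ not incident to $a$; these are automatically disjoint from the $k$ facets already listed. Hence $\facets(P) \ge k + 3$, as required.

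The main (and essentially only) subtlety is the distinctness check for the $\bar F$'s; once that is in hand, the rest of the argument is pure bookkeeping combining the pyramid structure of $Q$ with the preceding lemma.
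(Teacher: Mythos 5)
Your proof is correct and matches the argument the paper intends: the paper states this as an immediate corollary of the preceding lemma, with the implicit reasoning being exactly your apex count (the apex $a$ lies on $Q$ and on the $k-1$ distinct facets of $P$ meeting $Q$ in the side-facet ridges, plus at least $3$ facets avoiding $a$ when $P$ is not a pyramid). Your explicit verification that $F \mapsto \bar F$ is injective is a welcome detail the paper leaves unstated.
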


From the corollary, one can conclude, that if a 2-neighborly 7-polytope $P$ with at most 16 facets has a pyramid over $P_{5,8,14}$ as a facet, then $P$ is a 2-fold pyramid over $P_{5,8,14}$.

Suppose now, that $P$ is not a pyramid and $P$ has no facets with 9 or more vertices, except a pyramid over $P_{5,8,12}$.
Then, its incidence matrix looks like in Fig.~\ref{fig:case5812} (remember, that an incidence matrix of a non-pyramid has at least 3 zeroes in every column). 
\begin{figure}
	\centering
	\begin{tabular}{l|*{12}{c}|}
\multicolumn{1}{c}{}
& $v_1$ & $v_2$ & $v_3$ & $v_4$ & $v_5$ & $v_6$ & $v_7$ & $v_8$ & $v_9$ & $v_{10}$ & \dots & \multicolumn{1}{c}{$v_{n}$}\\
\cline{2-13}
$F_1$ & 0 & 0 & 1 & 1 & 1 & 1 & 1 & 1 & 1 & $\star$ & $\star$ & $\star$\\
$F_2$ & 1 & 1 & 0 & 0 & 1 & 1 & 1 & 1 & 1 & $\star$ & $\star$ & $\star$\\ 
$F_3$ & 1 & 1 & 1 & 1 & 0 & 0 & 1 & 1 & 1 & $\star$ & $\star$ & $\star$\\
$F_4$ & 1 & 1 & 1 & 1 & 1 & 1 & 0 & 0 & 1 & $\star$ & $\star$ & $\star$\\
$F_5$ & 0 & 1 & 0 & 1 & 0 & 1 & 1 & 1 & 1 & $\star$ & $\star$ & $\star$\\
$F_6$ & 1 & 0 & 1 & 0 & 1 & 0 & 1 & 1 & 1 & $\star$ & $\star$ & $\star$\\
$F_7$ & 0 & 1 & 0 & 1 & 1 & 1 & 0 & 1 & 1 & $\star$ & $\star$ & $\star$\\
$F_8$ & 0 & 1 & 1 & 1 & 1 & 0 & 0 & 1 & 1 & $\star$ & $\star$ & $\star$\\
$F_9$ & 1 & 1 & 1 & 0 & 1 & 0 & 0 & 1 & 1 & $\star$ & $\star$ & $\star$\\
$F_{10}$ & 1 & 0 & 1 & 0 & 1 & 1 & 1 & 0 & 1 & $\star$ & $\star$ & $\star$\\
$F_{11}$ & 1 & 0 & 1 & 1 & 0 & 1 & 1 & 0 & 1 & $\star$ & $\star$ & $\star$\\
$F_{12}$ & 1 & 1 & 0 & 1 & 0 & 1 & 1 & 0 & 1 & $\star$ & $\star$ & $\star$\\
$F_{13}$ & 1 & 1 & 1 & 1 & 1 & 1 & 1 & 1 & 0 & $\star$ & $\star$ & $\star$\\
$F_{14}$ & 1 & 1 & 1 & 1 & 1 & 1 & 1 & 1 & 1 & 0 & 0 & 0\\
$F_{15}$ & $\star$ & $\star$ & $\star$ & $\star$ & $\star$ & $\star$ & $\star$ & $\star$ & 0 & $\star$ & $\star$ & $\star$\\
$F_{16}$ & $\star$ & $\star$ & $\star$ & $\star$ & $\star$ & $\star$ & $\star$ & $\star$ & 0 & $\star$ & $\star$ & $\star$\\
\cline{2-13}
	\end{tabular}
	\caption{An incidence matrix of a 7-polytope $P$, one of whose facets is a pyramid over $P_{5,8,12}$}\label{fig:case5812}
\end{figure}
The vertices $v_1$--$v_8$ and facets $F_1$--$F_{12}$ form an incidence matrix of $P_{5,8,12}$.
Every row of the matrix must have at most nine 1's.
A row $F_{13}$ is not a subset of $F_{14}$. Hence, exactly one of the stars in the row $F_{13}$ is equal to~1. Without loss of generality, $v_{10} \in F_{13}$, $v_{i} \notin F_{13}$, $i > 10$.
Thus, the intersection of $F_{13}$ and $F_{14}$ is a $P_{5,8,12}$. Consequently, $F_{13}$ is a pyramid with base $P_{5,8,12}$ and apex $v_{10}$. Then, $v_{10}$ is incident with all facets except $F_{14}$--$F_{16}$.
Hence, the intersection of columns $v_{10}$ and $v_{n}$ is a subset of $v_9$.
Therefore, $P$ is not 2-neighborly. Summarizing these arguments, one can come to the following conclusion.

\begin{proposition}
	\label{prop:58}
	Let $Q$ be a $k$-fold pyramid over $P_{5,8,12}$, $k \in \N$.
	Let $P$ be a 2-neighborly $(k+6)$-polytope with at most $k+15$ facets,
	and $P$ has no facets with $k+8$ or more vertices, except combinatorially equivalent to $Q$.
	Then $P$ is a pyramid over $Q$.
\end{proposition}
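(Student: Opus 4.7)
The plan is to proceed by induction on $k$, with the base case $k=1$ given by the matrix analysis of Figure~\ref{fig:case5812} and the paragraph preceding the statement. For the inductive step (with $k\ge 2$), the first move is to apply Corollary~\ref{cor:pyr}: since $Q$ is a pyramid for $k\ge 1$ and a facet of $P$, either $P$ itself is a pyramid, or $\facets(P)\ge \facets(Q)+3 = k+15$, which combined with the hypothesis $\facets(P)\le k+15$ forces equality.

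Case~I, when \emph{$P$ is a pyramid} with apex $v^{\ast}$ and base $P'$. If $F_Q=P'$, we are done. Otherwise $F_Q$ is a lateral facet and $v^{\ast}\in V(Q)$. Since $P_{5,8,12}$ itself is not a pyramid (its reduced Gale diagram $(1,1,1,1,1,1,1,1)$ has $m(C)=0$; see Section~\ref{sec:Gale}), $v^{\ast}$ cannot be one of the $P_{5,8,12}$-vertices of $Q$, so $v^{\ast}$ is one of the apexes $a_1,\dots,a_k$. After relabelling, $F_Q$ is a pyramid with apex $v^{\ast}$ over a $(k-1)$-fold pyramid $Q'$ over $P_{5,8,12}$, and $Q'$ is a facet of $P'$. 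Dimension, facet-count and vertex-count bookkeeping shows that $P'$ satisfies the hypotheses of the proposition for $k-1$: it is a $(k+5)$-polytope with at most $k+14$ facets, has $Q'$ as a facet, and every other facet of $P'$ has one fewer vertex than the corresponding lateral facet of $P$, hence at most $k+6<k+7$ vertices. By induction $P'$ is a pyramid over $Q'$ with some apex $v^{\ast\ast}$, and combining apexes yields $P = \text{pyramid over } Q$ with apex $v^{\ast\ast}$.

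Case~II, when \emph{$P$ is not a pyramid} and $\facets(P)=k+15$. Each of the $k$ base facets $B_j$ of $Q$ is a $(k-1)$-fold pyramid over $P_{5,8,12}$, with $k+7$ vertices and dimension $k+4$; being a facet of $F_Q$, it is a ridge of $P$, and therefore lies in exactly one facet $F_j^{\ast}\neq F_Q$ of $P$. By hypothesis either $|V(F_j^{\ast})|\le k+7$ or $F_j^{\ast}\cong Q$. The first alternative is ruled out by a dimension argument: $V(F_j^{\ast})\supseteq V(B_j)$ forces $V(F_j^{\ast})=V(B_j)$ (both of cardinality $k+7$), placing $F_j^{\ast}$ in the affine hull of $B_j$ of dimension $k+4$, contradicting $\dim F_j^{\ast}=k+5$. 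Hence every $F_j^{\ast}\cong Q$, and $F_j^{\ast}$ has exactly one ``new'' vertex $v_j^{\mathrm{new}}\notin V(F_Q)$ serving as a fresh apex over $B_j$. The remaining step extends the matrix argument for $k=1$: one traces the incidences of the $v_j^{\mathrm{new}}$'s through the pyramid structures of the $F_j^{\ast}$'s and through the at-most-$2$ facets of $P$ not adjacent to $F_Q$, and finds a pair of vertex columns whose intersection is contained in a third column, contradicting 2-neighborliness.

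The main obstacle lies in this final incidence-tracing step. For $k=1$ it is short, as the paragraph preceding the proposition makes explicit (one new vertex $v_{10}$ and only three ``free'' facets $F_{13}$--$F_{16}$). For $k\ge 2$ the bookkeeping grows, but the structure is tightly constrained: each $F_j^{\ast}$ being a $k$-fold pyramid over $P_{5,8,12}$ rigidifies the incidence pattern of the $v_j^{\mathrm{new}}$'s up to a small number of combinatorial choices, and the tight count $\facets(P)=k+15$ leaves only two facets of $P$ not already accounted for by $F_Q$ and its $12+k$ neighbours. Enumerating these residual configurations and showing that each either violates 2-neighborliness or is already a pyramid (handled by Case~I) closes the argument.
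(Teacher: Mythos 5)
Your proposal sets up the right objects, but it does not actually prove the proposition: the decisive step of Case~II for $k\ge 2$ is missing. You correctly reduce to $\facets(P)=k+15$, identify the $k$ ridges $B_j$, and force each $F_j^{\ast}\cong Q$ with a single new apex vertex $w_j$ (your dimension argument here matches the paper's ``$F_{13}$ is not a subset of $F_{14}$'' step). But then you write that one ``traces the incidences'' and ``enumerates the residual configurations'' to find two columns whose intersection lies in a third~--- and you never do it. This is precisely the content of the proposition: the paper's own proof \emph{is} this incidence trace, carried out explicitly for $k=1$ in Fig.~\ref{fig:case5812} (showing $v_{10}$ lies in all facets except $F_{14}$--$F_{16}$, whence the columns of $v_{10}$ and $v_n$ intersect inside the column of $v_9$) and then asserted to generalize. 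Note also that your induction buys you nothing in Case~II: the non-pyramid hypothesis produces no smaller polytope to which the inductive hypothesis applies, since the facets $F_j^{\ast}\cong Q$ live at the same $k$. What the completion actually requires, following the paper's template: each apex $a_i$ of $F_Q$ lies in all $13+k$ facets accounted for ($F_Q$, $F_1,\dots,F_{12}$, and the $F_j^{\ast}$ with $j\ne i$) except $F_i^{\ast}$, so non-pyramidality (at least three zeros per column) forces exactly two further facets $E_1,E_2$, both avoiding every apex; then one shows the $k+11$ lateral ridges of $F_i^{\ast}$ pair with distinct facets so that the column of $w_i$ has exactly three zeros, namely at $F_Q$, $E_1$, $E_2$; finally, for any vertex $u\notin V(F_Q)\cup\{w_i\}$ the intersection of columns $w_i$ and $u$ lies inside the column of $a_i$, contradicting 2-neighborliness~--- and one must separately dispose of the degenerate situations where no such $u$ exists or where $w_i=w_j$. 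None of this is supplied in your text.

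There is also a flaw in your induction scaffolding itself. Your base case is ``the matrix analysis of Figure~\ref{fig:case5812} and the paragraph preceding the statement,'' but that paragraph explicitly assumes $P$ is \emph{not} a pyramid; so at $k=1$ your Case~I is not covered by the base case, and your inductive reduction for Case~I would require a $k=0$ version of the proposition that is outside its stated range $k\in\N$. Fortunately the pyramid case is immediate without any induction, which both repairs this and shows the induction is superfluous: since $F_Q$ is a proper face, $\vertices(P)\ge\vertices(F_Q)+1=k+9$, so the base $P'$ of the pyramid (itself a facet of $P$) has $\vertices(P')=\vertices(P)-1\ge k+8$ vertices, and the hypothesis that every facet with $k+8$ or more vertices is combinatorially equivalent to $Q$ forces $P'\cong Q$ directly. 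This one-line observation replaces your entire apex-relabelling argument (which, incidentally, misstates the apex at the end: $P$ is the pyramid with apex $v^{\ast}$ over the base $\mathrm{pyr}(Q',v^{\ast\ast})\cong Q$, though this is harmless since multifold pyramids commute). In short: the easy half of the proposition you prove in an unnecessarily roundabout way, and the hard half~--- the general-$k$ non-pyramid contradiction~--- is exactly the part you leave as ``the main obstacle.''
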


So, we need to check if there are 2-neighborly 7-polytopes $P$ with at most 16 facets and at least one facet of type $P_{6,9,15}$ (not a pyramid over $P_{5,8,14}$), $P_{6,10,14}$, or $P_{6,10,15}$.

Suppose, a 2-neighborly 7-polytope $P$ has 16 facets, one of facets is $P_{6,9,15}$ and $P$ has no facets with 10 or more vertices. An incidence matrix $M$ of $P$ has an incidence matrix of $P_{6,9,15}$ as a submatrix. Every row in $M$ has at most 9 ones. Totally, $M$ has at most $16 \cdot 9 = 144$ ones. The first 9 columns in $M$ has $96 + 9 = 105$ ones (from a matrix of $P_{6,9,15}$ and from the last row of $M$). Every other column has at least 10 ones. (By Corollary~\ref{cor:1}, every vertex of $P$ is incident with at least 10 facets.)
Therefore, $M$ has at most $\lfloor (144 - 105) / 10 \rfloor = 3$ additional columns.
By the computer program, we enumerate all such matrices and have found that only a pyramid over $P_{6,9,15}$ can be a 2-neighborly 7-polytope with such properties.

The same we do for 7-polytopes with facets $P_{6,10,14}$ and $P_{6,10,15}$.
Except pyramids, we have found only one 2-neighborly 7-polytope with at most 16 facets.
It has 14 vertices and can be realized as a 0/1-polytope (see Fig.~\ref{fig:P71416}). 

Therefore, there are 10 combinatorial types of 2-neighborly 7-polytopes with at most 16 facets: nine pyramids over 2-neighborly 6-polytopes and a polytope $P_{7,14,16}$.

\subsection{d-Polytopes with at most d+9 facets for d > 7}  

In this section, we consider only 2-neighborly $d$-polytopes with at least $d+4$ vertices and at most $d+9$ facets.
From Lemma~\ref{lem:d+1}, we know, that such polytopes have a facet with at least $d+2$ vertices.
From Corollary~\ref{cor:pyr}, if such a polytope $P$ has at least one facet, combinatorially equivalent to $k$-fold pyramids over $P_{5,8,14}$, $P_{6,9,15}$, $P_{6,10,14}$, or $P_{6,10,15}$, then $P$ is a pyramid.
From Proposition~\ref{prop:58}, if $P$ has no facets with at most $d+2$ vertices and all facets with exactly $d+2$ vertices combinatorially equivalent to a $k$-fold pyramid over $P_{5,8,12}$, then $P$ is a $(k+1)$-fold pyramid over $P_{5,8,12}$.

There are only two unconsidered cases left:
8-polytopes with $P_{7,14,16}$ as a facet and 10-polytopes with $P_{9,12,18}$ as a facet.

Suppose, a 2-neighborly 8-polytope $P$ has at most 17 facets, and one of facets is combinatorially equivalent to $P_{7,14,16}$. Hence, all other 16 facets of $P$ have a common 6-faces with $P_{7,14,16}$.
Two 6-faces of $P_{7,14,16}$ a simplices, and 14 are combinatorially equivalent to $P_{6,10,14}$.
We already now, that only two 2-neighborly 7-polytopes with at most 16 facets has $P_{6,10,14}$ as a facet:
$P_{7,14,16}$ and a pyramid over $P_{6,10,14}$. 
We already know, that if the pyramid over $P_{6,10,14}$ is a facet of $P$, then $P$ is a pyramid (over $P_{7,14,16}$).
Thus, $P$ has 17 facets and at least 15 of them are combinatorially equivalent to $P_{7,14,16}$.
Consequently, $P$ has at least 18 vertices ($P_{7,14,16}$ has 4 additional vertices w.r.t. $P_{6,10,14}$).
An incidence matrix of $P$ is shown on Fig.~\ref{fig:P8}.
\begin{figure}
	\centering
	\begin{tabular}{c|*{7}{c}|*{7}{c}|}
		\multicolumn{1}{c}{}
		& $v_1$ & \dots & $v_{10}$ & $v_{11}$ & $v_{12}$ & $v_{13}$ & \multicolumn{1}{c}{$v_{14}$} & $v_{15}$ & $v_{16}$ & $v_{17}$ & $v_{18}$ & $v_{19}$ & \dots & \multicolumn{1}{c}{$v_{n}$}\\
		\cline{2-15}
		$F_1$ &  &  &  &  &  &  &  & $\star$ &  &  & \dots &  &  & $\star$\\
		\vdots &  &  &  \multicolumn{3}{c}{$P_{7,14,16}$}  &  &  & \vdots &  &  & $\ddots$ &  &  & \vdots\\
		$F_{15}$ &  &  &  &  &  &  &  & $\star$ &  &  & \dots &  &  & $\star$\\
		$F_{16}$ & 1 & \dots & 1 & 0 & 0 & 0 & 0 & 1 & 1 & 1 & 1 & 0 & \dots & 0\\ 
		\cline{2-8}
		$F_{17}$ & 1 & \dots & 1 & 1 & 1 & 1 & 1 & 0 &   &  & \dots &  &  & 0\\ 
		\cline{2-15}
	\end{tabular}
	\caption{An incidence matrix of an 8-polytope $P$, one of whose facets is $P_{7,14,16}$}\label{fig:P8}
\end{figure}
An incidence matrix of $P_{7,14,16}$ occupies rows $F_1$--$F_{16}$ and columns $v_1$--$v_{14}$. 
Without loss of generality, we suppose that $v_{11}, \dots, v_{14} \notin F_{16}$ and $v_{15}, \dots, v_{18} \in F_{16}$. Hence, if we remove from the matrix columns $v_{11}$, \dots, $v_{14}$, $v_{19}$, \dots, $v_{n}$, and the row $F_{16}$, then we get a matrix equivalent to an incidence matrix of $P_{7,14,16}$ (up to permutations of columns and rows). By the computer program, we enumerate all the variants and have found only one (up to permutations of the last 4 columns). The values in columns $v_{15}$--$v_{18}$ coincides with values in columns $v_{11}$--$v_{14}$, except the values in rows $F_{16}$ and $F_{17}$.
The result is shown on Fig.~\ref{fig:P82}.
\begin{figure}
	\centering
	\begin{tabular}{c|*{14}{c}|*{7}{c}|}
		\multicolumn{1}{c}{} & $v_1$ & \multicolumn{12}{c}{\dots} & \multicolumn{1}{c}{$v_{14}$} & \multicolumn{3}{c}{\dots} & $v_{18}$ & \multicolumn{2}{c}{\dots} & \multicolumn{1}{c}{$v_{n}$}\\
		\cline{2-22}
$F_{1}$ & 0 & 0 & 0 & 0 & 0 & 0 & 1 & 1 & 1 & 1 & 1 & 1 & 1 & 0 & 1 & 1 & 1 & 0 & $\star$ & \dots & $\star$\\
\multirow{4}{*}{\vdots} & 1 & 1 & 1 & 1 & 1 & 1 & 0 & 0 & 0 & 0 & 0 & 0 & 0 & 1 & 0 & 0 & 0 & 1 &  \multirow{13}{*}{\vdots} & & \multirow{13}{*}{\vdots} \\
 & 0 & 1 & 1 & 1 & 1 & 0 & 1 & 1 & 1 & 1 & 1 & 1 & 0 & 0 & 1 & 1 & 0 & 0 &  &&\\ 
 & 1 & 0 & 0 & 1 & 1 & 1 & 1 & 1 & 1 & 1 & 1 & 0 & 1 & 0 & 1 & 0 & 1 & 0 &  &&\\
 & 1 & 1 & 1 & 0 & 0 & 1 & 1 & 1 & 1 & 1 & 0 & 1 & 1 & 0 & 0 & 1 & 1 & 0 &  &&\\
$F_{6}$ & 0 & 0 & 1 & 0 & 1 & 1 & 1 & 1 & 1 & 0 & 1 & 1 & 1 & 1 & 1 & 1 & 1 & 1 &  &&\\
\multirow{9}{*}{\vdots} & 0 & 1 & 0 & 1 & 0 & 1 & 1 & 1 & 0 & 1 & 1 & 1 & 1 & 1 & 1 & 1 & 1 & 1 &  &&\\
 & 1 & 0 & 1 & 1 & 0 & 0 & 1 & 0 & 1 & 1 & 1 & 1 & 1 & 1 & 1 & 1 & 1 & 1 &  &&\\
 & 1 & 1 & 0 & 0 & 1 & 0 & 0 & 1 & 1 & 1 & 1 & 1 & 1 & 1 & 1 & 1 & 1 & 1 &  &&\\
 & 0 & 1 & 1 & 1 & 1 & 1 & 1 & 1 & 0 & 0 & 1 & 1 & 0 & 1 & 1 & 1 & 0 & 1 &  &&\\
 & 1 & 1 & 1 & 1 & 1 & 0 & 0 & 0 & 1 & 1 & 1 & 1 & 0 & 1 & 1 & 1 & 0 & 1 &  &&\\
 & 1 & 0 & 1 & 1 & 1 & 1 & 1 & 0 & 1 & 0 & 1 & 0 & 1 & 1 & 1 & 0 & 1 & 1 &  &&\\
 & 1 & 1 & 0 & 1 & 1 & 1 & 0 & 1 & 0 & 1 & 1 & 0 & 1 & 1 & 1 & 0 & 1 & 1 &  &&\\
 & 1 & 1 & 1 & 0 & 1 & 1 & 0 & 1 & 1 & 0 & 0 & 1 & 1 & 1 & 0 & 1 & 1 & 1 &  &&\\
 & 1 & 1 & 1 & 1 & 0 & 1 & 1 & 0 & 0 & 1 & 0 & 1 & 1 & 1 & 0 & 1 & 1 & 1 & $\star$ & \dots & $\star$\\
$F_{16}$ & 1 & 1 & 1 & 1 & 1 & 1 & 1 & 1 & 1 & 1 & 0 & 0 & 0 & 0 & 1 & 1 & 1 & 1 & 0 &\dots&0\\
$F_{17}$ & 1 & 1 & 1 & 1 & 1 & 1 & 1 & 1 & 1 & 1 & 1 & 1 & 1 & 1 & 0 & 0 & 0 & 0 & 0 &\dots&0\\
		\cline{2-22}
\end{tabular}
\caption{An incidence matrix of an 8-polytope $P$, one of whose facets is $P_{7,14,16}$}\label{fig:P82}
\end{figure}
Every of rows $F_6$--$F_9$ has exactly 14 ones. 
Hence, we can extract from them incidence matrices for $P_{7,14,16}$.
It is easy to check, that the facet $F_6$ has 14 vertices and 15 facets (after removing 4 columns with zeroes, the row $F_2$ will be a subset of $F_{10}$). So, it cann't be $P_{7,14,16}$.
We get a contradiction.
There are only one 2-neighborly 8-polytope with at most 17 facets and a facet $P_{7,14,16}$.
It is a pyramid over $P_{7,14,16}$.

Suppose, a 2-neighborly 10-polytope $P$ has at most 19 facets, and one of facets is combinatorially equivalent to $P_{9,12,18}$. We does not consider the case, when $P$ is a pyramid. Hence, $P$ has no a facet with 13 or more vertices. Thus, every row in an incidence matrix of $P$ has at most 12 ones. By Corollary~\ref{cor:1}, every column in an incidence matrix of $P$ has at least 13 ones. Since every row in an incidence matrix of $P_{9,12,18}$ has exactly 10 ones, then $P$ has at most 14 vertices.
By our program, we enumearte all such matrices and found only one suitable~--- a pyramid over $P_{9,12,18}$.





%
%

\end{document}